\begin{document}

 \newtheorem{thm}{Theorem}[section]
 \newtheorem{cor}[thm]{Corollary}
 \newtheorem{lem}[thm]{Lemma}{\rm}
 \newtheorem{prop}[thm]{Proposition}

 \newtheorem{defn}[thm]{Definition}{\rm}
 \newtheorem{assumption}[thm]{Assumption}
 \newtheorem{rem}[thm]{Remark}
 \newtheorem{ex}{Example}
\numberwithin{equation}{section}
\def\la{\langle}
\def\ra{\rangle}
\def\glexe{\leq_{gl}\,}
\def\glex{<_{gl}\,}
\def\e{{\rm e}}

\def\x{\mathbf{x}}
\def\P{\mathbb{P}}
\def\h{\mathbf{h}}
\def\by{\mathbf{y}}
\def\bz{\mathbf{z}}
\def\F{\mathcal{F}}
\def\R{\mathbb{R}}
\def\T{\mathbf{T}}
\def\N{\mathbb{N}}
\def\D{\mathbf{D}}
\def\V{\mathbf{V}}
\def\U{\mathbf{U}}
\def\K{\mathbf{K}}
\def\Q{\mathbf{Q}}
\def\M{\mathbf{M}}
\def\oM{\overline{\mathbf{M}}}
\def\O{\mathbf{O}}
\def\C{\mathbb{C}}
\def\P{\mathbb{P}}
\def\Z{\mathbb{Z}}
\def\H{\mathcal{H}}
\def\A{\mathbf{A}}
\def\V{\mathbf{V}}
\def\AA{\overline{\mathbf{A}}}
\def\B{\mathbf{B}}
\def\c{\mathbf{C}}
\def\L{\mathcal{L}}
\def\bS{\mathbf{S}}
\def\H{\mathcal{H}}
\def\I{\mathbf{I}}
\def\Y{\mathbf{Y}}
\def\X{\mathbf{X}}
\def\G{\mathbf{G}}
\def\f{\mathbf{f}}
\def\z{\mathbf{z}}
\def\v{\mathbf{v}}
\def\y{\mathbf{y}}
\def\d{\hat{d}}
\def\bx{\mathbf{x}}
\def\bI{\mathbf{I}}
\def\y{\mathbf{y}}
\def\g{\mathbf{g}}
\def\w{\mathbf{w}}
\def\b{\mathbf{b}}
\def\a{\mathbf{a}}
\def\u{\mathbf{u}}
\def\q{\mathbf{q}}
\def\s{\mathcal{S}}
\def\cc{\mathcal{C}}
\def\co{{\rm co}\,}
\def\tg{\tilde{g}}
\def\tx{\tilde{\x}}
\def\tg{\tilde{g}}
\def\tA{\tilde{\A}}

\def\supmu{{\rm supp}\,\mu}
\def\supp{{\rm supp}\,}
\def\cd{\mathcal{C}_d}
\def\cok{\mathcal{C}_{\K}}
\def\cop{COP}

\title
{Existence of Gaussian cubature formulas}
\author{Jean B. Lasserre}
\address{LAAS-CNRS and Institute of Mathematics\\
University of Toulouse\\
LAAS, 7 avenue du Colonel Roche\\
31077 Toulouse C\'edex 4, France\\
Tel: +33561336415}
\email{lasserre@laas.fr}

\date{}
\begin{abstract}
We provide a necessary and sufficient condition for existence of Gaussian cubature formulas.
It consists of checking whether some overdetermined linear system has a solution
and so complements Mysovskikh's theorem which requires computing
common zeros of orthonormal polynomials. 
Moreover, the size of the linear system shows that existence of a cubature formula imposes 
severe restrictions on the associated linear functional. For fixed precision (or degree),
the larger the number of variables the worse it gets. And for fixed 
number of variables, the larger the precision the worse it gets. Finally,
we also provide an interpretation of the necessary and sufficient condition
in terms of existence of a polynomial with very specific properties.
\end{abstract}

\keywords{Orthogonal polynomials; Gaussian cubature formulas}

\subjclass{65D32 42C05 33C52}

\maketitle

\section{Introduction}

In addition of being of self-interest, Gaussian quadrature formulas for one-dimensional integrals are particularly important because
among all other quadrature formulas with same number of nodes, 
they have maximum precision. Indeed, when supported on $p$ nodes,
they are exact for all polynomials of degree at most $2p-1$. Moreover,
the nodes are exactly the zeros of the orthogonal polynomial of degree $p$ and
existence is guaranteed whenever the 
moment matrix of the associated linear functional is positive definite. 

When its multi-dimensional analogue exists (then called Gaussian {\it cubature} formula), it shares those two important properties
of support and precision. That is, the nodes of a cubature formula of degree (or precision) $2p-1$
are the common zeros of all orthonormal polynomials of degree $p$.
However, and in contrast to the one-dimensional case,
its existence is not guaranteed even if the moment matrix of
its associated linear functional is positive definite. 

To the best of our knowledge, the only necessary and sufficient condition available is
\cite[Theorem 3.7.4]{dunkl} due to Mysovskikh, which states that an $n$-dimensional Gaussian cubature formula
of degree $2m-1$ exists if and only if the orhonormal polynomials of degree $m$ have exactly 
$s_{m-1}:={n+m-1\choose n}$ common zeros. So this condition requires computing those zeros, or, equivalently,
all joint eigenvalues of $n$ associated Jacobi matrices; see e.g. \cite[Theorem 3.6.2]{dunkl}.
Equivalently, this amounts to check whether some $n$ {\it multiplication} matrices of size
${n+m-2\choose m-1}$ commute pairwise; see \cite[Theorem 3.6.5]{dunkl}.

On the other hand, in Cools et al.  \cite{cools}
one may find several lower bounds on the number of nodes required for a given specific precision;
see e.g. \cite[Theorem 9, 10, 11,12, 13]{cools}. For more details on orthogonal polynomials and
cubature formulas, the interested reader is referred to e.g. the excellent book of Dunkl and Xu \cite{dunkl} and the survey by
Cools et al. \cite{cools}.

{\bf Contribution.} This  paper is concerned with existence of Gaussian cubature formulas associated with
a Borel measure $\mu$ on $\R^n$, with all moments finite.
Our contribution is to provide a necessary and sufficient condition for existence, different in spirit from
(and complementary to) Mysovskikh's theorem which requires solving polynomial equations,
or, equivalently, computing all joint eigenvalues of  $n$ Jacobi matrices.
Namely, we prove that a Gaussian cubature formula or precision $2m-1$ exists if and only if a certain 
overdetermined linear system
has a solution. The coefficient matrix of this linear system comes from expressing the product 
$P_\alpha P_\beta$ of any two orthonormal polynomials  $P_\alpha,P_\beta$ of degree $m$, in the basis of
orthonormal polynomials $(P_\theta)$, of degree at most $2m$.
In fact, in this expansion, only the constant coefficient and the coefficients of the 
degree-$2m$ orthonormal polynomials matter. This linear system is always
overdetermined and the larger $n$ for fixed $m$ (resp. the larger $m$ for fixed $n$) the worse it gets.
This shows that existence of a Gaussian cubature formula imposes severe restrictions on the associated linear functional. 

Finally, we also provide an interpretation of the necessary and sufficient condition, namely that
there exists a polynomial $Q$, which is a linear combination of orthonormal polynomials
of degree $2m$, and  such that
\[\int P_\alpha(\x) P_\beta(\x) \,d\mu(\x)\,=\,\int P_\alpha(\x) P_\beta(\x) \,Q(\x)\,d\mu(\x),\]
for all pairs $(P_\alpha, P_\beta)$ of orthonormal polynomials of degree $m$..

\section{Notation, definitions and preliminaries}
Let $\x=(x_1,\ldots,x_n)$ and let $\R[\x]$ be the ring of real polynomials in the variables $\x$
and $\R[\x]_d$ its subset of polynomials of degree at most $d$, which
is a vector space of dimension $s_d:={n+d\choose d}$.
The number of monomials of degree exactly $d$ is denoted by $r_d:={n+d-1\choose d}$.
A polynomial $f\in\R[\x]_d$ can be identified with its vector of coefficients
$(f_\alpha)=:\f\in\R^{s_d}$, in the canonical basis $(\x^\alpha)$, $\alpha\in\N^n$.
For any real symmetric matrix $\A$, the notation $\A\succeq0$ (resp. $\A\succ0$) stands for
$\A$ is positive semidefinite (resp. positive definite).

Let us define the {\it graded lexicographic order} (Glex) $\alpha\glexe\,\beta$ 
on $\N^n$, which first 
creates a partial order by $\vert\alpha\vert\leq\,\vert\beta\vert$, and then refines
this to a total order by breaking ties when $\vert\alpha\vert=\vert\beta\vert$ as would a dictionary with $x_1=a$, $x_2=b$, etc. For instance with $n=2$, the ordering reads
$1,x_1,x_2,x_1^2,x_1x_2,x_2^2, \ldots$.

For every integer
$m\in\N$, let $\x^m\in\R[\x]^{r_{m}}$ be the column vector of all monomials of degree $m$, 
with the $\glex$ ordering. For instance, with $n=2$ and $k=3$, $\x^3\in\R[\x]^4$ and $\x^3$ reads
\[\x^3\,=\,(x_1^3, x_1^2x_2,x_1x_2^2,x_2^3)^T.\]

With any sequence $\y=(y_\alpha)$, $\alpha\in\N^n$, one may associate a linear function
$\L_\y:\R[\x]\to \R$ by 
\begin{equation}
\label{riesz}
f\:(=\sum_\alpha f_\alpha\x^\alpha)\:\mapsto\quad \L_\y(f)\,=\,\sum_\alpha f_\alpha\,y_\alpha,\qquad f\in\R[\x].\end{equation}

By a slight abuse of notation, and for any $\v\in\R[\x]^p$ denote
by $\L_\y(\v)$ the vector $(\L_\y(v_j))$, $j\leq p$. Similarly,
for any matrix $\V\in\R[\x]^{p\times r}$ denote
by $\L_\y(\V)$ the matrix $(\L_\y(V_{ij}))$, $i\leq p$, $j\leq r$.
If $\mu$ is a finite and positive Borel measure with finite moments $\y=(y_\alpha)$, $\alpha\in\N^n$, then
\begin{equation}
\label{mom}
\L_\y(f)\,=\,\sum_{\alpha\in\N^n}f_\alpha y_\alpha\,=\,\sum_{\alpha\in\N^n}f_\alpha\int\x^\alpha\,d\mu(\x)
\,=\,\int fd\mu,\end{equation}
and $\mu$ is called a representing measure for $\y$.

\subsection*{Moment matrix}
The {\it moment} matrix $\M_d(\y)$ associated with a sequence $\y=(y_\alpha)$, is  
the real symmetric matrix with rows and columns indexed by $\N^n_d$, and whose entry $(\alpha,\beta)$ is just 
$\L_\y(\x^{\alpha+\beta})\,(=y_{\alpha+\beta})$, for every $\alpha,\beta\in\N^n_d$. Alternatively,
$\M_d(\y)\,=\,\L_\y ((1,\x,\ldots,\x^d)^T(1,\x,\ldots,\x^d))$.
Of course, if $\y$ has a representing measure $\mu$, then
$\M_d(\y)\succeq0$ for all $d\in\N$, because
\[\la\f,\M_d(\y)\f\ra\,=\,\L_\y(f^2)\,=\,\int f^2\,d\mu\,\geq0,\qquad\forall \,\f\,\in\R^{s_d}.\]
However, the converse is not true in general, i.e., $\M_d(\y)\succeq0$ for all $d$, does not imply that $\y$ has  representing measure.

\subsection{Orthonormal polynomials}
Most of the material of this section is taken from Helton et al. \cite{hlp}.
Given $\mu$ and $\y=(y_\alpha)$, $\alpha\in\N^n_{2d}$, as in (\ref{mom}), 
assume that $\M_d(\y)\succ0$ for every $d$.
Then one may define the scalar 
product $\la\cdot,\cdot\ra_\y$ on $\R[\x]_d$:
\[\la f,g\ra_\y\,:=\,\la\f,\M_d(\y)\g\ra\,=\,\int fg\,d\mu,\qquad \forall\,f,g\in\R[\x]_d.\]
With $d\in\N$ fixed, one may also associate a {\it unique} family of polynomials
$(P_\alpha)\subset\R[\x]_d$, $\alpha\in\N^n_d$, orthonormal with respect to $\mu$, as follows:
 \begin{equation}\label{defpolortho}
 \left\{
 \begin{array}{l}
 P_\alpha\in{\rm lin.span}\,\{\x^\beta\,:\:\beta \glexe\alpha\}\\
 \la P_\alpha,P_\beta\ra_\y\,=\,\delta_{\alpha=\beta},\quad\alpha,\beta\in\N^n_d\\
 \la P_\alpha,\x^\beta\ra_\y\,=\,0\quad\mbox{if }\beta \glex\alpha\\
 \la P_\alpha,\x^\alpha\ra_\y\,>\,0,\quad\alpha\in\N^n_{d},
 \end{array}\right.\end{equation}
 where $\delta_{\alpha=\beta}$ denotes the usual Kronecker symbol.
Existence and uniqueness of such a family is guaranteed by the Gram-Schmidt orhonormalization process following the `Glex" order of monomials, and by positivity of the moment matrix $\M_d(\y)$. See e.g. \cite[Theorem 3.1.11, p. 68]{dunkl}.

\subsection*{Computation}

Computing the family of orthonormal polynomials is relatively easy once the moment matrix
$\M_d(\y)$ is available. Suppose that one wants to compute $p_\sigma$ for some 
$\sigma\in\N^n_d$.

Build up the submatrix $\M^\sigma_d(\y)$ obtained from $\M_d(\y)$ by keeping only those 
columns indexed by $\beta\glexe\sigma$ and with
rows indexed by $\alpha\glex\sigma$. Hence $\M^\sigma_d(\y)$ has one row less than columns. Complete $\M^\sigma_d(\y)$ with an additional last row described by
$\M^\sigma_d(\y)[\sigma,\beta]=\x^\beta$, $\beta\glexe\sigma$. Then, up to a normalizing constant, $P_\sigma={\rm det}\,\M^\sigma_d(\y)$.  
For instance with $n=2$, $d=2$ and $\sigma=(11)$, one has
\begin{equation}
\label{compute}
\x\mapsto P_{(11)}(\x)\,=\,M\,{\rm det}\,\left[\begin{array}{ccccc}
y_{00}&y_{10}&y_{01}&y_{20}&y_{11}\\
y_{10}&y_{20}&y_{11}&y_{30}&y_{21}\\
y_{01}&y_{11}&y_{02}&y_{21}&y_{12}\\
y_{20}&y_{30}&y_{21}&y_{40}&y_{31}\\
1&x_1&x_2&x_1^2&x_1x_2\end{array}\right].\end{equation}
where the constant $M$ is chosen so that $\int P_{(11)}^2d\mu=1$; eee e.g. \cite{dunkl,hlp}.

\section{Gaussian cubature formula}

Given a finite Borel measure $\mu$ on $\K\subset\R^n$ with all moments finite,
points $\x(i)\in\R^n$, and weights $\gamma_i\in\R$, $i=1,\ldots s$, 
the linear functional $I:L_1(\mu)\to\R$,
\begin{equation}
\label{cuba}
f\mapsto I(f)\,:=\,\sum_{i=1}^s \gamma_i f(\x(i)),\quad f\in\mathcal{F},\end{equation}
is called a {\it cubature} formula of degree (or precision) $p$, if 
$I(f)=\int_\K f\,d\mu$ for all $f\in\R[\x]_p$; the points $(\x(i))$ are called the {\it nodes}. In other words, the approximation of the integral
$\int fd\mu$ by (\ref{cuba}) is exact for all polynomials of degree at most $p$.

Cubature formulas are the multivariate analogues of {\it quadrature} formulas in the one dimensional case.
By Tchakaloff's theorem, given $\mu$ on $\K$ with finite moments, and $d\in\N$, there exists
a measure $\nu$ finitely supported on at most $r_d$ points of $\K$, and whose moments up to order at least $d$
coincide with those of $\mu$; see e.g. Putinar \cite{putinar}. And so there exists
a cubature formula  of degree $d$. 
In the one-dimensional case, the celebrated {\it Gaussian quadrature} formula based on $d$ points (called nodes) of
$\K$ has precision $2d-1$ and positive weights. It exists as soon as the moment matrix of order $d$ is positive definite,
and all the nodes are zeros of the orthonormal polynomial of degree $d$. It is an important 
quadrature because among all quadrature formulas with same number of nodes, it is the one with maximum precision, 

When its multivariate analogue exists (then called a {\it Gaussian cubature}), the nodes are now the common zeros of 
all orthonormal polynomials of degree $d$, the precision is also $2d-1$, and the weights are also positive.
But its existence is not guaranteed even if all moment matrices are positive definite.
For a detailed account on cubatures and orhogonal polynomials, the interested reader is referred to the very nice book of Dunkl and Xu \cite{dunkl}, as well as the survey or Cools et al. \cite{cools}.

\subsection{Existence of Gaussian cubature}

Mysovskikh's theorem \cite[Theorem 3.7.4]{dunkl} 
states that a Gaussian cubature of degree $m$ exists if and only if the
orthonormal polynomials $\P_m$ have exactly ${n+m-1\choose n}$ common zeros.
And so checking existence reduces to computing all joint eigenvalues of $n$ so-called Jacobi matrices
of size $s_{m-1}$ described in e.g. \cite[p. 114]{dunkl}. We here provide an alternative 
criterion for existence which reduces to check whether a certain overdetermined linear system has a solution.

Let $\mu$ be a finite Borel measure with support $\supmu=\K\subset\R^n$,
and with all moments $\y=(y_\alpha)$, $\alpha\in\N^n$, finite.
We will adopt the same notation in \cite{dunkl}. Let $(P_\alpha)$, $\alpha\in\N^n$, be a system of orthonormal polynomials 
with respect to $\mu$,
and with each
$m\in\N$, let $\P_m=[P_{\vert\alpha\vert=m}]\in\R[\x]^{r_{m}}$ be the (column) vector 
$(P_{\alpha^{(m_1)}},\ldots,P_{\alpha^{(m_{r_m})}})^T$, where 
$(\alpha^{(m_i)})\subset\N^n_m$ are all monomials of degree $m$, with
$\alpha^{(m_1)}\glex\alpha^{(m_2)}\cdots \glex\alpha^{(m_{r_m})}$.
For instance with $n=2$,
\[\P_2\,=\,(P_{20},P_{11},P_{02})^T.\]

For any sequence $\z=(z_\alpha)$ denote by $\oM_d(\z)$ the moment matrix, with
rows and columns index in $\N^n_d$, and with entries
\[\oM_d(\z)[\alpha,\beta]\,=\,\L_\z(P_\alpha\,P_\beta),\qquad \alpha,\beta\in\N^n_d,\]
where $\L_\z:\R[\x]\to\R$ is the linear functional defined in (\ref{riesz}).
In other words, $\oM(\z)$ is the moment matrix expressed in the basis of orthonormal polynomials
and can be deduced from $\M_d(\z)$ by a linear transformation.
Notice that  if $\z=\y$ then $\oM_d(\y)$ is the identity matrix ${\rm{\bf I}}$.\\

Observe that for each $\gamma,\beta\in\N^n_m$, with $\vert\gamma\vert=\vert\beta\vert=m$,

\begin{equation}
\label{double1}
P_\gamma\,P_\beta\,=\,
\A^{(0,2m)}_{\gamma\beta}\,\P_0+\sum_{j=1}^{2m-1}\A^{(j,2m)}_{\gamma\beta}\,\P_j+
\A^{(2m,2m)}_{\gamma\beta}\,\P_{2m},\end{equation}
for some row vectors $\A^{(j,2m)}_{\gamma\beta}\in\R^{r_{j}}$, $j=0,1,\ldots,2m$.
Let $t_m:=r_{m}(r_{m}+1)/2$.

\begin{thm}
\label{thmain}
Let $\A^{(0,2m)}$ be the vector $(\A^{(0,2m)}_{\gamma\beta})\in\R^{t_m}$, and let
$\A^{(2m,2m)}\in\R^{t_m\times r_{2m}}$ be the matrix whose
rows are the vectors $\A^{(2m,2m)}_{\gamma\beta}$, defined in (\ref{double1})
for $\gamma,\beta\in\N^n_m$ with $\vert\gamma\vert=\vert\beta\vert=m$.

There exists a Gaussian cubature formula of degree $2m-1$ if and only if the linear system
\begin{equation}
\label{thmain-1}
\A^{(0,m)}+\A^{(2m,m)}\,\u\,=\,0,\end{equation}
has a solution $\u\in\R^{r_{2m}}$.
\end{thm}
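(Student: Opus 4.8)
My plan is to pass between a degree-$2m-1$ cubature and the truncated moment matrix of its finitely atomic measure, read in the orthonormal basis $(P_\theta)$. Set $c_0:=\L_\y(\P_0)=\int\P_0\,d\mu>0$ and recall that $\int P_\theta\,d\mu=0$ whenever $|\theta|\ge1$, so $\L_\y(\P_j)=0$ for $1\le j\le 2m$. For the necessity direction, suppose a Gaussian cubature of degree $2m-1$ exists, with atomic measure $\nu=\sum_{i=1}^{s_{m-1}}\gamma_i\delta_{\x(i)}$ and moment sequence $\z$. Exactness on $\R[\x]_{2m-1}$ forces $z_\alpha=y_\alpha$ for $|\alpha|\le 2m-1$, hence $\L_\z(\P_0)=c_0$ and $\L_\z(\P_j)=0$ for $1\le j\le 2m-1$. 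Because the nodes are precisely the common zeros of $\P_m$, we have $P_\gamma(\x(i))=0$ for all $|\gamma|=m$, so $\L_\z(P_\gamma P_\beta)=\int P_\gamma P_\beta\,d\nu=0$ for every $\gamma,\beta$ with $|\gamma|=|\beta|=m$. Applying $\L_\z$ to the expansion (\ref{double1}) kills the intermediate terms and leaves $0=c_0\,\A^{(0,2m)}_{\gamma\beta}+\A^{(2m,2m)}_{\gamma\beta}\,\L_\z(\P_{2m})$; putting $\u:=c_0^{-1}\L_\z(\P_{2m})\in\R^{r_{2m}}$ and stacking over the $t_m$ pairs $(\gamma,\beta)$ gives $\A^{(0,2m)}+\A^{(2m,2m)}\u=0$.

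For the sufficiency direction, let $\u$ solve the system. I would build a truncated sequence $\z=(z_\alpha)_{|\alpha|\le 2m}$ by taking $z_\alpha=y_\alpha$ for $|\alpha|\le 2m-1$ and fixing the degree-$2m$ moments through $\L_\z(\P_{2m})=c_0\,\u$; this determines them uniquely, since the leading monomials of $\P_{2m}$ are exactly $\{\x^\alpha:|\alpha|=2m\}$, so the associated linear map is triangular and invertible. The heart of the argument is then to read off $\oM_m(\z)$ block by block. When $|\alpha|,|\beta|\le m-1$, the product $P_\alpha P_\beta$ has degree $\le 2m-2$, so $\L_\z(P_\alpha P_\beta)=\langle P_\alpha,P_\beta\rangle_\y=\delta_{\alpha\beta}$; when $|\alpha|=m>|\beta|$ the degree is $\le 2m-1$, giving again $\L_\z(P_\alpha P_\beta)=\delta_{\alpha\beta}=0$; and when $|\alpha|=|\beta|=m$, the computation of the first paragraph together with the solved system yields $\L_\z(P_\alpha P_\beta)=c_0(\A^{(0,2m)}_{\alpha\beta}+\A^{(2m,2m)}_{\alpha\beta}\u)=0$. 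Hence $\oM_m(\z)=\I_{s_{m-1}}\oplus 0$, positive semidefinite of rank exactly $s_{m-1}$.

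Finally, since $\oM_m(\z)$ and $\M_m(\z)$ are congruent through the invertible change of basis from monomials to orthonormal polynomials, $\M_m(\z)\succeq0$ with ${\rm rank}\,\M_m(\z)=s_{m-1}={\rm rank}\,\M_{m-1}(\z)$; that is, $\M_m(\z)$ is a flat extension of $\M_{m-1}(\z)$. The flat extension theorem of Curto and Fialkow then produces a positive, $s_{m-1}$-atomic measure $\nu$ with $\int f\,d\nu=\L_\z(f)$ for all $f\in\R[\x]_{2m}$. As $z_\alpha=y_\alpha$ for $|\alpha|\le 2m-1$, this $\nu$ is a cubature of degree $2m-1$ supported on the minimal number $s_{m-1}$ of nodes with positive weights, and $\int P_\gamma^2\,d\nu=\L_\z(P_\gamma^2)=0$ for $|\gamma|=m$ forces every node to be a common zero of $\P_m$; by Mysovskikh's characterization it is a Gaussian cubature. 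I expect the main obstacle to lie precisely in this direction: ensuring that the prescribed degree-$2m$ moments render $\M_m(\z)$ positive semidefinite and flat, and then invoking the flat extension theorem so as to recover a genuinely \emph{Gaussian} cubature (minimal support, positive weights, nodes common zeros of $\P_m$) rather than an arbitrary representing measure.
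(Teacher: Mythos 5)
Your proof is correct and follows essentially the same route as the paper's: the necessity direction evaluates the expansion of $P_\gamma P_\beta$ against the moments of the node measure, and the sufficiency direction builds the truncated sequence $\z$, shows $\oM_m(\z)$ is a flat positive semidefinite extension, and invokes the Curto--Fialkow flat extension theorem together with Mysovskikh's characterization. Your explicit tracking of the normalization constant $c_0=\L_\y(\P_0)$ (which the paper implicitly sets to $1$) and your direct derivation of $\L_\z(\P_m\P_m^T)=0$ from vanishing at the nodes are only cosmetic variations.
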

\begin{proof}
{\it Only if part.} If there exists a Gaussian cubature (hence of degree $2m-1$),
there is a finite Borel measure $\nu$ supported on the $s_{m-1}$ common zeros of $\P_m$; see \cite[Theorem 3.7.4]{dunkl}.
Let $\z=(z_\alpha)$, $\alpha\in\N^n_{2m}$, be the moments up to order $2m$, of the Borel measure $\nu$.
Since the cubature  is exact for polynomials in $\R[\x]_{2m-1}$, 
\[z_\alpha\,=\,\L_\z(\x^\alpha)\,=\,\L_\y(\x^\alpha)\,=\,y_\alpha,\qquad \forall \alpha\in\N^n_{2m-1}.\]
In addition, $\nu$ being supported on the common zeros of $\P_m$, we also have
\[\L_\z(P_\alpha^2)=\int P_\alpha(\x)^2\nu(d\x)\,=\,0,\qquad\forall \alpha\in\N^n_m\mbox{ with }\vert\alpha\vert=m,\]
that is, all diagonal elements of the positive semidefinite matrix 
$\L_\z(\P_m\P_m^T)$ vanish, which in turn implies $\L_\z(\P_m\P_m^T)=0$.
Combining with (\ref{double1}) yields
\begin{eqnarray*}
0\,=\,\L_\z(P_\gamma P_\beta)&=&
\A^{(0,2m)}_{\gamma\beta} +\sum_{j=1}^{2m-1}\A^{(j,2m)}_{\gamma\beta} \underbrace{\L_\z(\P_j)}_{=\L_\y(\P_j)=0}
+\A^{(2m,2m)}_{\gamma\beta} \L_\z(\P_{2m})\\
&=&\A^{(0,2m)}_{\gamma\beta} 
+\A^{(2m,2m)}_{\gamma\beta} \L_\z(\P_{2m})\\
&=&\left(\A^{(0,2m)}+\A^{(2m,2m)}\,\L_\z(\P_{2m})\right)(\gamma,\beta)\end{eqnarray*}
which is (\ref{thmain-1}) with $\u=\L_\z(\P_{2m})$.

{\it If part.} Conversely, assume that (\ref{thmain-1}) holds for some vector $\u\in\R^{r_{2m}}$.
Consider the sequence $\z=(z_\alpha)$, $\alpha\in\N^n_{2m}$,
with $z_\alpha=y_\alpha$ for all $\alpha\in\N^n_{2m-1}$ and
$\L_\z(P_\alpha)=u_\alpha$ for all $\alpha\in\N^n_{2m}$, with $\vert\alpha\vert=2m$.
(Equivalently, $\L_\z(\P_{2m})=\u$.)
And indeed such a sequence exists.
It suffices to determine $z_\alpha$ for all $\alpha\in\N^n_{2m}$ with $\vert\alpha\vert=2m$
(equivalently, $\L_\z(\x^{2m})$).
Recall the notation 
$\x^k=(\x^\alpha)$, $\alpha\in\N^n$, with $\vert\alpha\vert=k$,
and recall that for each $d\in\N$, $(1,\P_1,\ldots,\P_d)$ is a basis of $\R[\x]_d$, and so,
\begin{equation}
\label{triangular}
\left[\begin{array}{l}1\\\P_1\\ \cdot\\ \cdot\\ \P_{2m}\end{array}\right]\,=\,
\underbrace{\left[\begin{array}{ccccc} 1&0&\cdot&\cdot&0\\
\bS_{01}&\bS_1&0&\cdot &0\\
\cdot&\cdot&\cdot&\cdot&\cdot\\
\cdot&\cdot&\cdot&\cdot&\cdot\\
\bS_{02m}&\bS_{12m}&\cdot &\bS_{(2m-1)2m}&\bS_{2m}\end{array}\right]}_{=\bS}\, 
\left[\begin{array}{l}1\\
\x^1\\ \cdot\\ \cdot\\  \x^{2m}\end{array}\right]\end{equation}
for some change of basis matrix  $\bS$ which is invertible and block-lower triangular.
And so $\z$ solves 
\[\u\,=\,\bS_{2m} \,\L_\z(\x^{2m})+\Theta\underbrace{\L_\z(1,\x^1,\cdots,\x^{2m-1})^T}_{=\y_{2m-1}},\]
where $\Theta\in\R^{r_{2m}\times s(2m-1)}$ is the matrix $[\bS_{02m}\vert\,\cdots\vert\bS_{(2m-1)2m}]$,
and $\y_{2m-1}=(y_\alpha)$, $\vert\alpha\vert\leq 2m-1$.
And so $\L_\z(\x^{2m})=\bS_{2m}^{-1}\u-\bS_{2m}^{-1}\Theta\,\y_{2m-1}$.

Next, the moment matrix $\oM_m(\z)$, which by definition reads
\[\left[\begin{array}{ccc}\oM_{m-1}(\z)& \vert &\begin{array}{c}\L_\z(\P_m^T)\\ \L_\z(\P_1\P_m^T)
\\\cdot\\ \cdot\\\L_\z(\P_{m-1}\P_m^T)\end{array}\\
-&&-\\
\begin{array}{lllll}\L_\z(\P_m)&\L_\z(\P_m\P_1)^T&\cdots&\cdots \L_\z(\P_m\P_{m-1}^T)\end{array}&\vert &\L_\z(\P_m\P_m^T)\end{array}\right],\]
simplifies to
\begin{equation}
\label{decompp}
\oM_m(\z)\,=\,\left[\begin{array}{ccc}{\rm{\bf I}}& \vert &0\\
-&&-\\
0&\vert &\L_\z(\P_m\P_m^T)\end{array}\right],
\end{equation}
because $\L_\z(\P_i\P_j^T)=\L_\y(\P_i\P_j^T)=0$ for all $i\neq j$ with $i+j\leq 2m-1$.
But then we also have $\L_\z(\P_m\P_m^T)=0$ because from (\ref{double1}),
\begin{eqnarray*}
\L_\z(P_\gamma P_\beta)&=&
\A^{(0,2m)}_{\gamma\beta} +\sum_{j=1}^{2m-1}\A^{(j,2m)}_{\gamma\beta} \underbrace{\L_\z(\P_j)}_{=\L_\y(\P_j)=0}
+\A^{(2m,2m)}_{\gamma\beta} \underbrace{\L_\z(\P_{2m})}_{=\u}\\
&=&\left(\A^{(0,2m)}+\A^{(2m,2m)}\,\u\right)(\gamma,\beta)=0,\end{eqnarray*}
for all $\gamma,\beta\in\N^n_m$ with $\vert\gamma\vert=\vert\beta\vert=m$.

And so $\oM_{m}(\z)\succeq0$, and ${\rm rank}\,\oM_m(\z)={\rm rank}\,\oM_{m-1}(\z)$.
By the flat extension theorem of Curto and Fialkow, 
$\z$ is the moment sequence of
a finite Borel measure $\psi$ on $\R^n$, supported on ${\rm rank}\,\oM_m(\z)=s_{m-1}$ distinct points
$\x(k)$, $k=1,\ldots,s_{m-1}$; see e.g. Curto and Fialkow \cite{curto1,curto2}, Lasserre \cite[Theorem 3.7]{lassbook}, or Laurent \cite{laurent}.
That is, denoting by $\delta_{\x}$ the Dirac measure at $\x\in\R^n$,
\[\psi\,=\,\sum_{k=1}^{s_{m-1}}\gamma_k\,\delta_{\x(k)},\]
for some strictly positive weights $\gamma_k$.
From $0=\L_\z(\P_m\P_m^T)$ we obtain
\[0\,=\,\L_\z(P_\alpha^2)\,=\,\int_{\R^n}P_\alpha(\x)^2\,d\psi(\x),\quad\forall \alpha\in\N^n_{m},\mbox{ with }\vert\alpha\vert=m,\]
which proves that each $\x(k)$ is a common zero of $\P_m$, $k=1,\ldots,s_{m-1}$.
But by \cite[Corollary 3.6.4]{dunkl}, $\P_m$ has at most $s_{m-1}$ common zeros. Therefore,
$\P_m$ has exactly $s_{m-1}$ common zeros and $\psi$ is supported on {\it all} common zeros of $\P_m$.
By \cite[Theorem 3.7.4]{dunkl}, $\mu$ admits a Gaussian cubature formula of degree $2m-1$.
Indeed, since $z_\alpha=y_\alpha$ for all $\vert\alpha\vert\leq 2m-1$, then for every $f\in\R[\x]_{2m-1}$,
\[\int f(\x)\,d\mu(\x)\,=\,\L_\y(f)\,=\,\L_\z(f)\,=\,\int fd\psi\,=\,\sum_{k=1}^{s_{m-1}}\gamma_k\,f(\x(k)),\]
with $\gamma_k>0$ for every $k$.
\end{proof}
By \cite[Theorem 7]{cools}, we also know that the $\x(k)$'s belong to the interior of the convex hull ${\rm conv}\,\K$ of $\K$.

Observe that existence of a solution $\u\in\R^{r_{2m}}$ for the linear system (\ref{thmain-1}) 
imposes drastic conditions on the linear functional
$\L_\y$ because (\ref{thmain-1}) is always overdetermined. Moreover, for fixed $m$ (resp. for fixed $n$)
the larger $n$ (resp. the larger $m$) the worse it gets. This is because
$t_m\,(=r_{m}(r_{m}+1)/2)$ increases faster than $r_{2m}$.

\subsection*{Interpretation} We next provide an explicit expression of the vector $\A^{(0,2m)}$
as well as the matrix $\A^{(2m,2m)}$, which permits to
obtain an interpretation of the condition (\ref{thmain-1}).

For $\gamma,\beta\in\N^m$, applying the linear functional $\L_\y$ to both sides of
(\ref{double1}) yields
\begin{equation}
\label{constant}
\delta_{\gamma=\beta}\,=\,\L_\y(P_\gamma P_\beta)\,=\,\A^{(0,2m)}_{\gamma\beta},\qquad \forall \vert\gamma\vert=\vert\beta\vert=m.\end{equation}
because $\L_\y(\P_k)=0$ for all $k$.  This provides an explicit expression of the vector $\A^{(0,2m)}$.
Similarly, multiplying both sides of (\ref{double1}) by $P_\kappa$ with $\vert\kappa\vert=2m$, and applying again $\L_\y$, yields
\begin{equation}
\label{term2m}
\L_\y(P_\gamma P_\beta P_\kappa)\,=\,\sum_{\vert\alpha\vert=2m}
\A^{(2m,2m)}_{\gamma\beta}(\alpha)\,\L_\y(P_\alpha P_\kappa)\,=\,\A^{(2m,2m)}_{\gamma\beta}(\kappa),\end{equation}
because $\L_\y(P_\alpha P_\kappa)=\delta_{\alpha=\kappa}$.
 This provides an explicit expression for all the entries of the matrix $\A^{(2m,2m)}$. And we obtain:
 
 \begin{cor}
 \label{cor-final}
 There exists a Gaussian cubature of degree $m$ if and only if there exists a polynomial $Q=\u^T\P_{2m}$,
 for some $\u\in\R^{r_{2m}}$, such that
 \begin{equation}
 \label{cor-final1}
 \L_\y(P_\gamma P_\beta)\,=\,\L_\y(P_\gamma P_\beta\,Q),
 \end{equation}
 for all $\beta,\gamma\in\N^n_m$ with $\vert\gamma\vert,\,\vert\beta\vert=m$.
 Equivalently, (\ref{cor-final1}) reads
 \[\int P_\gamma(\x) P_\beta(\x)\,d\mu(\x)\,=\,\int P_\gamma(\x) P_\beta(\x)\,Q(\x)\,d\mu(\x),\]
 for all $\beta,\gamma\in\N^n_m$ with $\vert\gamma\vert,\,\vert\beta\vert=m$, or in matrix form:
 \[{\rm{\bf I}}\,=\,\int \P_m(\x)\P_m(\x)^T\,Q(\x)\,d\mu(\x),\]
 \end{cor}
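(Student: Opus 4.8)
The plan is to deduce this corollary directly from Theorem \ref{thmain} by inserting the explicit expressions for $\A^{(0,2m)}$ and $\A^{(2m,2m)}$ obtained in the interpretation paragraph. Recall that for all $\gamma,\beta$ with $\vert\gamma\vert=\vert\beta\vert=m$, equation (\ref{constant}) gives $\A^{(0,2m)}_{\gamma\beta}=\L_\y(P_\gamma P_\beta)$, while (\ref{term2m}) gives, for each $\kappa$ with $\vert\kappa\vert=2m$, the entry $\A^{(2m,2m)}_{\gamma\beta}(\kappa)=\L_\y(P_\gamma P_\beta P_\kappa)$. Both of these are themselves consequences of orthonormality, so no new input is required beyond Theorem \ref{thmain}.

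First I would read off the $(\gamma,\beta)$-row of the linear system (\ref{thmain-1}). Writing $\u=(u_\kappa)_{\vert\kappa\vert=2m}$, that row states $\A^{(0,2m)}_{\gamma\beta}+\sum_{\vert\kappa\vert=2m}\A^{(2m,2m)}_{\gamma\beta}(\kappa)\,u_\kappa=0$, which by the two expressions above is exactly $\L_\y(P_\gamma P_\beta)+\sum_{\vert\kappa\vert=2m}u_\kappa\,\L_\y(P_\gamma P_\beta P_\kappa)=0$. By linearity of $\L_\y$ and the definition $Q=\u^T\P_{2m}=\sum_{\vert\kappa\vert=2m}u_\kappa P_\kappa$, the summed term equals $\L_\y(P_\gamma P_\beta\,Q)$, so the row becomes $\L_\y(P_\gamma P_\beta)=-\L_\y(P_\gamma P_\beta\,Q)$.

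The only point needing care is the sign: identity (\ref{cor-final1}) carries a plus, whereas the computation produces a minus. This is harmless, and I would resolve it by replacing $\u$ by $-\u$ (equivalently $Q$ by $-Q$). Indeed (\ref{thmain-1}) asks only for the existence of \emph{some} solution $\u\in\R^{r_{2m}}$, and $\u\mapsto-\u$ is a bijection of $\R^{r_{2m}}$ carrying the solution set of $\A^{(0,2m)}+\A^{(2m,2m)}\u=0$ onto that of $\A^{(0,2m)}-\A^{(2m,2m)}\u=0$; both are nonempty exactly when $\A^{(0,2m)}$ lies in the range of $\A^{(2m,2m)}$. Hence solvability of (\ref{thmain-1}) is equivalent to the existence of some $Q=\u^T\P_{2m}$ satisfying (\ref{cor-final1}) for all pairs $(\gamma,\beta)$ with $\vert\gamma\vert=\vert\beta\vert=m$, and combining this with Theorem \ref{thmain} yields the equivalence with existence of a Gaussian cubature of degree $2m-1$.

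Finally I would record the two equivalent reformulations. The integral form is immediate from (\ref{mom}), which identifies $\L_\y(fg)$ with $\int fg\,d\mu$, applied with $f=P_\gamma$ and $g=P_\beta Q$ on the right and $g=P_\beta$ on the left. For the matrix form I would assemble the scalar identities (\ref{cor-final1}) over all $\gamma,\beta$ into the single equation $\L_\y(\P_m\P_m^T\,Q)=\L_\y(\P_m\P_m^T)$; since the $P_\gamma$ are orthonormal the right-hand side is ${\rm{\bf I}}$, giving ${\rm{\bf I}}=\int\P_m(\x)\P_m(\x)^T\,Q(\x)\,d\mu(\x)$. I do not anticipate any genuine obstacle, as the corollary is essentially a restatement of Theorem \ref{thmain}; the only delicate bookkeeping is the sign change and the passage from the componentwise to the matrix formulation.
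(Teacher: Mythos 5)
Your proof is correct and follows essentially the same route as the paper: substitute the explicit expressions (\ref{constant}) and (\ref{term2m}) into the rows of the linear system (\ref{thmain-1}) and appeal to Theorem \ref{thmain}. You are in fact more careful than the paper's own proof, which silently drops the minus sign from (\ref{thmain-1}) (writing $\A^{(0,2m)}_{\gamma\beta}=(\A^{(2m,2m)}\u)_{\gamma,\beta}$); your replacement of $\u$ by $-\u$ is exactly the right way to reconcile the signs.
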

 \begin{proof}
 With $\u$ as in (\ref{thmain-1}) and using (\ref{constant})-(\ref{term2m}), for all $\beta,\gamma\in\N^n_m$ with $\vert\gamma\vert,\,\vert\beta\vert=m$, one obtains
 \begin{eqnarray*}
\L_\y(P_\gamma P_\beta)\,=\,\A^{(0,2m)}_{\gamma\beta}&=& (\A^{(2m,2m)}\u)_{\gamma,\beta}\\
&=&\L_\y\left(P_\gamma P_\beta \,(\sum_{\alpha\in\N^n_{2m},\vert\alpha\vert=2m}u_\alpha P_\alpha)\right)\\
&=&\L_\y\left(P_\gamma P_\beta \,(\u^T\P_{2m})\right),
\end{eqnarray*}
which  is   (\ref{cor-final1}) with $Q=\u^T\P_{2m}$. Conversely, if (\ref{cor-final1}) holds with
  for some $Q=\u^T\P_{2m}$ then obviously $\u$ is a solution of (\ref{thmain-1}).
 \end{proof}
 
\subsection*{Final remark.}
One may obtain more information about the polynomial $Q$ of Corollary \ref{cor-final}.
From the proof of Theorem \ref{thmain}, for any solution $\u\in\R^{r_{2m}}$ of (\ref{thmain-1}), one has
$\u=\L_\z(\P_{2m})$ where $\z$ is the moment vector of the measure supported on
the $s_{m-1}$ nodes of the Gaussian cubature. That is,
\[\u\,=\,\L_\z(\P_{2m})\,=\,\sum_{k=1}^{s_{m-1}}\gamma_k\,\P_{2m}(\x(k)),\] 
for some positive weights $\gamma_k$.
On the other hand, the polynomial $Q$ of Corollary \ref{cor-final} is of the form
$\u^T\P_{2m}$ where $\u$ solves (\ref{thmain-1}). Therefore,
\[\x\mapsto Q(\x)=\u^T\P_{2m}(\x)\,=\,\sum_{k=1}^{s_{m-1}}\gamma_k\,\P_{2m}(\x(k))^T\P_{2m}(\x).\]
Hence $Q$ satisfies
\[\L_\y(P_\alpha \,Q)\,=\,\u^T\L_\y(P_\alpha\,\P_{2m})\,=\,0,\quad\forall \vert\alpha\vert < 2m\]
(in particular $\displaystyle\int Qd\mu=0$), and
\[\L_\y(P_\alpha \,Q)\,=\,\u_\alpha\,=\,\sum_{k=1}^{s_{m-1}}\gamma_k P_\alpha(\x(k)),\quad\forall \vert\alpha\vert = 2m.\]

\end{document}